\newtheorem{theorem}{Theorem}
\newtheorem{corollary}{Corollary}
\newtheorem{proposition}{Proposition}
\theoremstyle{remark}
\begin{document}

\title[On Koebe radius...]{On Koebe radius and coefficients estimate for univalent harmonic mappings}

\author[M.~Borovikov]{Mikhail Borovikov}

\begin{abstract}
    The problem on estimating of the Koebe radius for univalent harmonic mappings
    of the unit disk $\mathbb D=\{z\in\mathbb C : |z|<1\}$ is considered. For a
    subclass of harmonic mappings with the standard normalization and a certain
    growth estimate for analytic dilatation, we provide a new estimate for the
    Koebe radius. A new estimate for Taylor coefficients of the holomorphic part of
    a function from the subclass under consideration is obtained as a corollary.
\end{abstract}

\thanks{The study was carried out with the financial support of the Ministry of Science and Higher Education of the Russian Federation in the framework of a scientific project under agreement No. 075-15-2025-013. 
Author is a member of the group “Leader” followed by Alexander Bufetov
which won the contest conducted by
Foundation for the Advancement of Theoretical Physics and Mathematics “BASIS”
and would like to thank its sponsors and jury.}

\address{
\hskip -\parindent Mikhail Borovikov${}^{1,2,3}$:
\newline \indent 1)~Faculty of Mechanics and Mathematics,
\newline \indent Lomonosov Moscow State University, Moscow, Russia;
\newline \indent 2)~Moscow Center for Fundamental and Applied Mathematics,
\newline \indent Lomonosov Moscow State University, Moscow, Russia;
\newline \indent 3)~St.~Petersburg State University, St.~Petersburg, Russia;
\newline \indent {\tt misha.borovikov@gmail.com}
}

\maketitle

\section{Introduction}\label{sec1}

The paper is devoted to the problem on estimate of the Koebe radius for
univalent harmonic mappings of the unit disk $\mathbb D=\{z\in\mathbb C :
|z|<1\}$ and for coefficients estimate problem for such mappings. For a certain
subclass of the class $\mathcal S_H^0$ of harmonic mappings with the standard
normalization we provide new estimate for the Koebe radius. Using this result
we present new estimates for Taylor coefficients of the holomorphic part of a
function from the subclass under consideration.

Let us explain what we mean by the \emph{Koebe radius} for a class of
mappings. Due to the famous Koebe $1/4$-theorem, for any $f\in \mathcal S$, where $\mathcal S$
stands for the class of univalent holomorphic functions in $\mathbb D$
possessing the standard normalization conditions $f(0)=0$ and $f'(0)=1$, the
set $f(\mathbb D)$ contains the disk $\{w\in\mathbb C : |w|<1/4\}$, and the
radius $1/4$ is the best possible. For other classes of univalent mappings of
$\mathbb D$ preserving the origin (for instance, for the classes of univalent
harmonic mappings or univalent solutions of various elliptic
PDE), it is natural to pose the question to determine the best possible
radius of the disk centered at the origin, which is contained in the image of
$\mathbb D$ by functions from the class under consideration. It is reasonable to call this radius the Koebe radius for the corresponding
class of mappings. Thus, the Koebe radius for the class $\mathcal S$ is $1/4$ and this
value is attained by the Koebe function and its rotations
\begin{equation*}
K(z)=\frac{z}{(1-z)^2}.
\end{equation*}
The concept of the Koebe radius for some classes of harmonic mappings was
introduced without the special name by several authors, see, for example,
\cite{PCh22}.

Let us recall the definition of the class $\mathcal S_H^0$ of univalent
harmonic mappings of $\mathbb D$. It is well-known that any complex-valued
harmonic function $f$ in $\mathbb D$ has the form $f=h+\overline{g}$, where $h$ and
$g$ are holomorphic functions in $\mathbb D$ (these functions $h$ and $g$ are
called holomorphic and anti-holomorphic components of $f$, respectively). The
class $\mathcal S_H^0$ consists of all harmonic functions $f$ in $\mathbb D$
which are univalent in $\mathbb D$ and possess the normalization conditions
$g(0)=h(0)=g'(0)=0$ and $h'(0)=1$.

The question on determination the Koebe radius for the class $\mathcal S_H^0$ is
related with the coefficients estimate problem for this class that was posed
by Clunie and Sheil-Small \cite{ClSh84} and that was motivated and inspired
by the famous Bieberbach's problem on coefficients estimate for the class
$\mathcal S$, which was solved by de Branges \cite{dBr85}, and which states that
the Maclaurin coefficients $a_n$, $n=1,2,\ldots$, of a function $f\in \mathcal S$
satisfy the estimate $|a_n|\leqslant n$. This estimate, similar to the case
of Koebe radius for the class $\mathcal S$, is sharp and it is attained on the Koebe
function $K(z)$.

The problem and conjecture stated by Clunie and Sheil-Small were to prove the
following estimates for Maclaurin coefficients $a_n$ and $b_n$,
$n=1,2,\ldots$, of holomorphic and anti-holomorphic components of a function
$f\in\mathcal S_H^0$, respectively: $|a_n|\leqslant (n+1)(2n+1)/6$,
$|b_n|\leqslant (n-1)(2n-1)/6$ and $\big||a_n|-|b_n|\big|\leqslant n$, and to
show that all these estimates are sharp and are attained by the \emph{harmonic
Koebe function}
\begin{equation*}
K_H(z)=\frac{z-\frac{1}{2}z^2+\frac{1}{6}z^3}{(1-z)^3}+
\overline{\biggl(\frac{\frac{1}{2}z^2+\frac{1}{6}z^3}{(1-z)^3}\biggr)}.
\end{equation*}
This problem was solved for several subclasses of $\mathcal S_H^0$, see, for
example, \cite{ClSh84,PK15,PL25}, but for the class $\mathcal S_H^0$ itself
it remains open. Let us revert to the question about Koebe radius for the
class $\mathcal S_H^0$. The image of $\mathbb{D}$ under mapping by $K_H$
contains the disk $\{w : |w|<1/6\}$ and it does not contain any disk of
larger radius centered at the origin, so the Koebe radius for $\mathcal
S_H^0$ is not greater than $1/6$. Clunie and Sheil-Small \cite{ClSh84} proved that
this radius not less than $1/16$ and conjectured that the Koebe radius 
is exactly $1/6$. The main aim of this work is to prove this conjecture for a
sufficiently wide class of harmonic mappings. 

We state now the main result of the present paper. First of all we
need to define the subclass of $\mathcal S_H^0$ we are dealing with. One of
the principal characteristics of the harmonic mapping $f=h+\overline{g}$ is
its analytic dilatation
\begin{equation*}
w_f(z)=\frac{g'(z)}{h'(z)}.
\end{equation*}
Analytic dilatation satisfies the conditions of the Schwarz lemma if
$f\in\mathcal S_H^0$ (see \cite[p.~79]{D04}), so for any $f\in\mathcal S_H^0$ there exists such nonnegative
$k\leqslant1$ and $m\geqslant1$ that
\begin{equation*}
|w_f(z)|\leqslant k|z|^m,\quad |z|<1.
\end{equation*}
This estimate allows us to define, for a given numbers $k\leqslant1$ and
$m\geqslant1$, the class
\begin{equation*}
\mathcal S_H^0(k,m)=\{f\in\mathcal S_H^0 : |w_f(z)|\leqslant k|z|^m,\enspace z\in\mathbb D\}.
\end{equation*}
Our main result gives a new estimate from below of the Koebe radius for the
class $\mathcal S_H^0(k,m)$ (the estimate \eqref{newest} in
Theorem~\ref{th1}). Using this estimate we obtain new
estimate for the first nonzero Maclaurin coefficient of the holomorphic part
of function from the class $\mathcal S_H^0(k,m)$. We refer the reader to
\cite{AbAlP19} where the best known estimate is obtained for this
coefficient for the whole class $\mathcal S_H^0$.

Notice that we will not touch the problem about estimate of remaining
coefficients and refer the interested reader to \cite{PKSt17}, where it is
possible to find a lot of information about state of the art of this problem.

The paper is organized as follows. In Section~\ref{sec2} we formulate the main
result of the paper (Theorem~\ref{th1}) and several its corollaries, while in
Section~\ref{sec3} we present the proofs.

\section{Main results and their corollaries}\label{sec2}

The main result of this paper is the following theorem.
\begin{theorem}\label{th1}
Let $F\in\mathcal S_H^0(k,m)$. Then
\begin{equation}\label{newest}
|F(z)|\geqslant \frac{|z|}{4\,(1+k|z|^m)^{2/m}},\quad z\in\mathbb D.
\end{equation}
In particular, the set $F(\mathbb D)$ for any function $F$ from
$\mathcal S_H^0(k,m)$ contains the disk $\big\{w :
|w|<1/\big(4(1+k)^{2/m}\big)\big\}$.
\end{theorem}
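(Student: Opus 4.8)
The plan is to obtain a lower bound for $|F(z)|$ by comparing the harmonic map $F$ with an associated holomorphic univalent function and then applying the classical Koebe $1/4$-theorem.

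The plan is to deduce the lower bound for $|F|$ from the classical Koebe $1/4$-theorem applied to the Riemann map of the image domain $\Omega=F(\mathbb D)$, after controlling the conformal radius of $\Omega$ at the origin by means of the dilatation bound. The whole estimate ultimately rests on a single extremal inequality for quasiconformal self-maps of the disk, which is where the specific factor $(1+k)^{2/m}$ will come from.

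The first step is to reduce the pointwise estimate to a statement about $\operatorname{dist}(0,\partial\Omega)$. Since $F$ is a homeomorphism of $\mathbb D$ onto $\Omega$, for every $r\in(0,1)$ one has $\min_{|z|=r}|F(z)|=\operatorname{dist}\bigl(0,\partial F(\{|z|<r\})\bigr)$, and the rescaled map $F_r(z)=r^{-1}F(rz)$ again lies in $\mathcal S_H^0$ with $|w_{F_r}(z)|\le k r^{m}|z|^{m}$, i.e. $F_r\in\mathcal S_H^0(kr^{m},m)$. Hence it suffices to prove, for \emph{every} pair $(k,m)$ and every $F\in\mathcal S_H^0(k,m)$, the boundary estimate $\operatorname{dist}(0,\partial\Omega)\ge 1/\bigl(4(1+k)^{2/m}\bigr)$; the pointwise bound \eqref{newest} then follows by applying this to $F_r$ with $r=|z|$, and the ``in particular'' assertion is the special case read off as $|z|\to1$. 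To prove the boundary estimate, let $\varphi:\mathbb D\to\Omega$ be the Riemann map with $\varphi(0)=0$ and $\varphi'(0)>0$. Then $\varphi/\varphi'(0)\in\mathcal S$, so the Koebe $1/4$-theorem gives $\Omega\supseteq\{|w|<\varphi'(0)/4\}$, whence $\operatorname{dist}(0,\partial\Omega)\ge\varphi'(0)/4$. Thus the problem becomes to bound the conformal radius $\varphi'(0)$ from below by $(1+k)^{-2/m}$.

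Next I would transfer the dilatation hypothesis to a self-map of the disk. Consider $\psi=\varphi^{-1}\circ F:\mathbb D\to\mathbb D$, a quasiconformal homeomorphism fixing the origin. Since $\varphi^{-1}$ is holomorphic, post-composition does not change the complex dilatation, so $|\mu_\psi(z)|=|\mu_F(z)|=|w_F(z)|\le k|z|^{m}$; in particular $\mu_\psi(0)=0$, so $\psi$ is conformal at the origin with $\psi_{\bar z}(0)=0$ and $\psi_z(0)=(\varphi^{-1})'(0)\,h'(0)=1/\varphi'(0)$. The entire theorem is therefore equivalent to the extremal estimate: if $\psi$ is a quasiconformal self-homeomorphism of $\mathbb D$ fixing $0$ with $|\mu_\psi(z)|\le k|z|^{m}$, then $|\psi_z(0)|\le(1+k)^{2/m}$. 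The candidate extremal map is radial, $\Phi_0(z)=zR(|z|)/|z|$, where $R$ solves the boundary-value problem $R'(r)/R(r)=\tfrac1r\cdot\tfrac{1-kr^{m}}{1+kr^{m}}$ with $R(1)=1$; integrating gives
\[
R(r)=\frac{(1+k)^{2/m}\,r}{(1+kr^{m})^{2/m}},
\]
and a direct computation shows that $\Phi_0$ has $|\mu_{\Phi_0}(z)|=k|z|^{m}$ everywhere and $\Phi_{0,z}(0)=R'(0)=(1+k)^{2/m}$, which both pins down the constant and shows the bound is sharp.

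The main obstacle is to prove this quasiconformal estimate rigorously, i.e. that the radial map maximizes $|\psi_z(0)|$. My plan here is a modulus argument: for small $\rho$ estimate the conformal modulus of the ring $\psi(\{\rho<|z|<1\})$ from below, using the variable bound $|\mu_\psi|\le k|z|^{m}$ together with the radial extremal metric $|dz|/|z|$ of the round annulus; because $\psi$ is conformal at $0$, the inner boundary of the image ring is asymptotically the circle of radius $|\psi_z(0)|\rho$, so letting $\rho\to0$ converts the modulus inequality into $\log|\psi_z(0)|\le \tfrac2m\log(1+k)$, which is exactly the desired bound. Making the variable-dilatation modulus estimate precise—choosing the correct test metric and controlling the error term coming from the non-circularity of the image of $\{|z|=\rho\}$—is the technical heart of the argument; the integral that produces the exponent $2/m$ is the same one that appears in the ODE above, which is a reassuring consistency check. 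Once this lemma is in hand, the chain $\operatorname{dist}(0,\partial\Omega)\ge\varphi'(0)/4=1/\bigl(4|\psi_z(0)|\bigr)\ge 1/\bigl(4(1+k)^{2/m}\bigr)$ closes the proof, and the rescaling of the first paragraph upgrades it to the pointwise estimate \eqref{newest}.
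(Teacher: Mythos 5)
Your architecture is correct and is organized genuinely differently from the paper's, even though the analytic core turns out to be the same. The paper never introduces the Riemann map of $\Omega=F(\mathbb D)$: after the same rescaling reduction $f(z)=F(az)/a$, it runs the extremal-length argument directly on the punctured image $\Omega_\varepsilon=\Omega\setminus\{|w|\leqslant\varepsilon\}$, bounding its module from below by pulling back a weighted radial metric through the harmonic map itself (this is where the dilatation bound enters), and from above by Fuchs's inequality $\mu(\Omega_\varepsilon)\leqslant\mu(\Delta_\varepsilon)$, where $\Delta_\varepsilon$ is the slit plane minus a small disk whose module is computed via the Koebe function --- so the classical ``$4$'' enters through the slit-plane comparison, not through the $1/4$-theorem. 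Your factorization $F=\varphi\circ\psi$ splits these ingredients: Koebe's theorem supplies the $4$, and a Schwarz-type lemma for self-homeomorphisms of $\mathbb D$ with $|\mu_\psi(z)|\leqslant k|z|^m$ supplies the $(1+k)^{2/m}$. The chain $\operatorname{dist}(0,\partial\Omega)\geqslant\varphi'(0)/4=1/(4|\psi_z(0)|)$, the identity $|\mu_\psi|=|w_F|$, and $\psi_z(0)=1/\varphi'(0)$ are all correct (and unproblematic pointwise, since $\psi$ is smooth as a holomorphic function composed with a harmonic one). What your route buys is a clean, reusable lemma and a transparent view of where the constant and the candidate extremal configuration come from; what the paper's route buys is getting the distance estimate in one stroke without discussing quasiconformal self-maps at all.

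Two points in your sketch of the key lemma need repair before it closes. First, the test metric: the plain annulus metric $|dz|/|z|$ does \emph{not} give the sharp constant. Pulling back $\rho_0(z)=1/(|z|\log(1/\rho))$ with the minimal-stretch normalization $\widetilde\rho(\psi(z))=\rho_0(z)/(|\psi_z|-|\psi_{\bar z}|)$ produces the area integral $\iint\rho_0^2\,\frac{1+|\mu|}{1-|\mu|}\,dx\,dy$, and the resulting integral $\int_\rho^1\frac{1+kt^m}{1-kt^m}\frac{dt}{t}=\log(1/\rho)+\frac2m\log\frac{1-k\rho^m}{1-k}$ yields only $|\psi_z(0)|\leqslant(1-k)^{-2/m}$, strictly weaker than $(1+k)^{2/m}$. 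You must instead take the weighted radial metric $\rho(z)\propto\frac{1-k|z|^m}{1+k|z|^m}\cdot\frac1{|z|}$, i.e.\ exactly the density appearing in your ODE for $R$; then $\int_\rho^1\frac{1-kt^m}{1+kt^m}\frac{dt}{t}=\log(1/\rho)-\frac2m\log\frac{1+k}{1+k\rho^m}$ gives the sharp bound. (This weighted metric is precisely the one the paper chooses, which is why I say the cores coincide.) Second, the error control at the inner boundary: a crude Gr\"otzsch-ring comparison would lose a factor of $4$; instead use that $\psi$ is differentiable at $0$ with $\psi_{\bar z}(0)=0$, so $\psi(\{|z|=\rho\})$ lies in $\{(c-o(1))\rho\leqslant|w|\leqslant(c+o(1))\rho\}$ with $c=|\psi_z(0)|$, and by injectivity the disk $\{|w|<(c-o(1))\rho\}$ is contained in $\psi(\{|z|<\rho\})$; hence $\psi(\{\rho<|z|<1\})$ is a subring of the round annulus $\{(c-o(1))\rho<|w|<1\}$ separating its boundary components, and monotonicity of the module gives the upper bound $\frac1{2\pi}\log\frac1{(c-o(1))\rho}$. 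With these two corrections your argument is complete and yields the theorem.
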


Notice that the class $\mathcal S^0_H(1,1)$ coincides with the class $\mathcal S^0_H$,
so in this case theorem is equavalent to \cite[Theorem~4.4]{ClSh84} and the class
$\mathcal S^0_H(0,m)$ coincides with the class $\mathcal S$ and in this case we arrive to the
Koebe $1/4$--theorem (so the estimate is sharp). The proof is similar to one presented in~\cite{PCh22}. Moreover, the class
$\mathcal S_H^0(k,1)$ defined above coincides with the class $\mathcal
S_H^0\big((1+k)/(1-k)\big)$ introduced in~\cite{PCh22}, and Theorem~1.2
from~\cite{PCh22} may be treated as the special case of Theorem~\ref{th1}.

The following corollary immediately follows from Theorem~\ref{th1}.

\begin{corollary}\label{cor1}
Let $k\leqslant 1/2$ or $m\geqslant4$. Then the image of $\mathbb D$ under the mapping by
$f\in\mathcal S_H^0(k,m)$ contains the disk $\{w : |w|<1/6\}\subset\{w : |w|<1/\big(4(1+k)^{2/m}\big)\}$.
\end{corollary}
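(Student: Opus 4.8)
The plan is to obtain the corollary as a purely arithmetic consequence of Theorem~\ref{th1}, with no further analysis of the mapping $f$ needed. By the second assertion of Theorem~\ref{th1}, for every $f\in\mathcal S_H^0(k,m)$ the image $f(\mathbb D)$ already contains the disk $\{w:|w|<1/(4(1+k)^{2/m})\}$. Thus it suffices to establish the inclusion of concentric disks
\begin{equation*}
\{w:|w|<1/6\}\subset\bigl\{w:|w|<1/(4(1+k)^{2/m})\bigr\},
\end{equation*}
after which the containment of the $1/6$-disk in $f(\mathbb D)$ follows by chaining this inclusion with Theorem~\ref{th1}. The displayed inclusion is equivalent to the scalar inequality $4(1+k)^{2/m}\leqslant6$, that is, to
\begin{equation*}
(1+k)^{2/m}\leqslant 3/2 .
\end{equation*}

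First I would record that the quantity $\varphi(k,m)=(1+k)^{2/m}$ is nondecreasing in $k$ (since $1+k\geqslant1$) and nonincreasing in $m$ (the exponent $2/m$ decreases while the base is at least $1$). The verification of $\varphi(k,m)\leqslant 3/2$ then separates according to which hypothesis is in force. If $m\geqslant4$, I would push $k$ to its extreme value using $k\leqslant1$, which gives
\begin{equation*}
(1+k)^{2/m}\leqslant 2^{2/m}\leqslant 2^{1/2}=\sqrt2<3/2 ,
\end{equation*}
so that $4(1+k)^{2/m}\leqslant4\sqrt2<6$ and the inclusion is in fact strict. If instead $k\leqslant1/2$, I would push the base to its extreme value using $1+k\leqslant3/2$, and then invoke $2/m\leqslant1$ to obtain
\begin{equation*}
(1+k)^{2/m}\leqslant (3/2)^{2/m}\leqslant 3/2 .
\end{equation*}

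There is essentially no obstacle here: once Theorem~\ref{th1} is in hand, the whole of the corollary reduces to the elementary estimate $(1+k)^{2/m}\leqslant3/2$, and the only point requiring attention is to arrange the case split so that in each branch exactly one variable is sent to its boundary value while monotonicity is used in the other. The evaluation $4(1+\tfrac12)^{2/2}=6$ at $k=\tfrac12$, $m=2$ shows that the cut-off $k=\tfrac12$ is exactly critical there, which accounts for the appearance of $1/2$ in the hypothesis.
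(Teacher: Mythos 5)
Your overall strategy is exactly the intended one: the paper offers no separate argument for this corollary, declaring it an immediate consequence of Theorem~\ref{th1}, and your reduction to the scalar inequality $4(1+k)^{2/m}\leqslant 6$, together with the branch $m\geqslant 4$ (where indeed $(1+k)^{2/m}\leqslant 2^{2/m}\leqslant\sqrt2<3/2$ because $k\leqslant1$), is carried out correctly.

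However, the branch $k\leqslant 1/2$ contains a genuine gap: you invoke $2/m\leqslant 1$, which is equivalent to $m\geqslant 2$, but the hypotheses give you only $m\geqslant 1$ — that is the sole constraint in the definition of $\mathcal S_H^0(k,m)$, and the paper explicitly works with $m=1$, e.g.\ when identifying $\mathcal S_H^0(k,1)$ with the quasiconformal class of \cite{PCh22}. For $1\leqslant m<2$ the needed inequality can fail: at $k=1/2$, $m=1$ one has $4(1+k)^{2/m}=4\,(3/2)^2=9>6$, so the asserted inclusion of disks $\{w : |w|<1/6\}\subset\{w : |w|<1/(4(1+k)^{2/m})\}$ is simply false there, and Theorem~\ref{th1} only yields the disk of radius $1/9$. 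This gap cannot be patched by a better argument; it reflects a defect in the statement as printed, whose hypothesis should read ``($k\leqslant 1/2$ and $m\geqslant 2$) or $m\geqslant 4$'' (the exact condition for $4(1+k)^{2/m}\leqslant 6$ being $m\geqslant 2\ln(1+k)/\ln(3/2)$). Your closing observation that $(k,m)=(1/2,2)$ is the critical configuration shows you sensed that $m=2$ is the true boundary of validity, but a complete write-up must either state the restriction $m\geqslant 2$ explicitly or point out that the corollary, read literally with $m\in[1,2)$ allowed, does not follow from Theorem~\ref{th1} — and its displayed inclusion is even inconsistent with it.
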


So, the conjecture of Clunie and Sheil-Small about 1/6 is proved for wide classes
of harmonic mappings.

When $m$ tends to infinity or $k$ tends to zero, the estimate in
Theorem~\ref{th1} tends to $1/4$, what is natural, because the module of $w_f(z)$
tends to zero, and so the class $\mathcal S^0_H(k,m)$ "tends" to the class $\mathcal S$, for which
the corresponding estimate is $1/4$ by the Koebe theorem.

Going further let $f\in\mathcal S_H^0$ and $f=h+\overline{g}$, where
\begin{equation*}
h(z)=z+\sum_{n=2}^\infty a_nz^n,\quad g(z)=\sum_{n=2}^\infty b_nz^n.
\end{equation*}
We consider the class $\mathcal S^{p,q}_H$ of harmonic mappings, for which
$p$ is the index of the first nonzero coefficient $a_p$, and $q$ is the index
of the first nonzero coefficient $b_q$. Then the following proposition is an
immediate consequence of Theorem~\ref{th1}:

\begin{proposition}\label{prop2}
The image of $\mathbb D$ under the mapping by a function $f\in\mathcal
S^{p,q}_H$ contains the disk $\{w : |w|<R_q\}$, where $R_q=2^{-2q/(q-1)}$. 
\end{proposition}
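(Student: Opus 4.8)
The plan is to reduce Proposition~\ref{prop2} to Theorem~\ref{th1} by showing that membership in $\mathcal S_H^{p,q}$ forces a concrete growth estimate on the analytic dilatation, placing $f$ inside the class $\mathcal S_H^0(1,q-1)$. The first step is to examine the order of vanishing of $w_f$ at the origin. Writing $h(z)=z+\sum_{n\geqslant p}a_nz^n$ and $g(z)=\sum_{n\geqslant q}b_nz^n$ with $a_p\neq0$ and $b_q\neq0$, we have $h'(0)=1$ while $g'(z)=qb_qz^{q-1}+\cdots$, so $w_f(z)=g'(z)/h'(z)$ is holomorphic near the origin and vanishes there to order exactly $q-1$. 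Note that $q\geqslant2$ because the normalization gives $g(0)=g'(0)=0$, hence $m:=q-1\geqslant1$ lies in the admissible range of Theorem~\ref{th1}. Since $f$ is a sense-preserving harmonic homeomorphism, its Jacobian $|h'|^2-|g'|^2$ is positive on $\mathbb D$, so $|w_f(z)|<1$ and $w_f$ maps $\mathbb D$ into itself.

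Next I would apply the Schwarz lemma for higher-order zeros to the holomorphic self-map $w_f$ of $\mathbb D$. The function $w_f(z)/z^{q-1}$ extends holomorphically across the origin, and on each circle $|z|=r<1$ it is bounded by $r^{-(q-1)}$; letting $r\to1$ and invoking the maximum principle yields $|w_f(z)|\leqslant|z|^{q-1}$ for all $z\in\mathbb D$. This is precisely the defining inequality of $\mathcal S_H^0(k,m)$ with $k=1$ and $m=q-1$, so $f\in\mathcal S_H^0(1,q-1)$. Observe that the index $p$ of the holomorphic part plays no role whatsoever, which is consistent with the fact that $R_q$ depends only on $q$.

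Finally I would invoke Theorem~\ref{th1} with these parameters: the image $f(\mathbb D)$ contains the disk of radius $1/\big(4(1+k)^{2/m}\big)=1/\big(4\cdot 2^{2/(q-1)}\big)$. The remaining step is the algebraic simplification $4\cdot 2^{2/(q-1)}=2^{2+2/(q-1)}=2^{2q/(q-1)}$, which gives the claimed radius $R_q=2^{-2q/(q-1)}$. I do not anticipate a genuine obstacle in this argument; the only points demanding care are the justification of the higher-order Schwarz estimate for $w_f$ and the verification that $q-1\geqslant1$ keeps us within the hypotheses of Theorem~\ref{th1}. As a sanity check, $q=2$ gives $R_2=2^{-4}=1/16$, recovering the Clunie--Sheil-Small lower bound for the full class, while $R_q\to1/4$ as $q\to\infty$, in agreement with the intuition that higher-order vanishing of $g$ pushes $f$ toward the holomorphic setting.
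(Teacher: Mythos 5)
Your proposal is correct and follows essentially the same route as the paper's own proof: observe that $w_f=g'/h'$ vanishes to order $q-1$ at the origin, apply the Schwarz lemma to conclude $|w_f(z)|\leqslant|z|^{q-1}$, and then invoke Theorem~\ref{th1} with $k=1$, $m=q-1$. The only difference is that you spell out the higher-order Schwarz argument and the final algebra, which the paper leaves implicit.
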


Moreover, for a function $f\in\mathcal S^{p,q}_H$ the estimate for the
coefficient $|a_p|$ may be obtained using the methods elaborated in
\cite{Sh90}.

\begin{proposition}\label{prop3}
Let $f\in\mathcal S^{p,q}_H$. Then
\begin{equation*}
|a_p|\leqslant d_q^{p-1}\Bigl(\frac{d_qR_q}{p}+p\Bigr),
\end{equation*}
where
\begin{equation*}
d_q=\frac{2\pi}{3\sqrt{3}R_q}.
\end{equation*}
Moreover,
\begin{equation*}
|a_2|\leqslant d_q\Bigl(\frac{d_qR_q}{2}+2\Bigr)-2.
\end{equation*}
\end{proposition}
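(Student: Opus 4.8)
The plan is to deduce the coefficient bound from the covering estimate of Proposition~\ref{prop2} by adapting the coefficient-estimation technique of~\cite{Sh90}. First I would record the two structural facts available for $f\in\mathcal S^{p,q}_H$: since $g$ vanishes to order $q$ while $h'(0)=1$, the dilatation $w_f=g'/h'$ vanishes to order $q-1$ at the origin, so the Schwarz lemma gives $|w_f(z)|\leqslant|z|^{q-1}$ and hence $f\in\mathcal S_H^0(1,q-1)$; and Proposition~\ref{prop2} provides the covering $f(\mathbb D)\supseteq\{w:|w|<R_q\}$ with $R_q=2^{-2q/(q-1)}$. These are the only properties of $f$ I intend to use. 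I would then extract $a_p$ through the coefficient integral $a_p=\frac{1}{2\pi r^p}\int_0^{2\pi}h(re^{i\theta})e^{-ip\theta}\,d\theta$ for $0<r<1$, so that $|a_p|\leqslant r^{-p}\,\frac{1}{2\pi}\int_0^{2\pi}|h(re^{i\theta})|\,d\theta$, reducing the task to bounding the circular means of $|f|$ and of $|g|$ on $|z|=r$ after writing $h=f-\overline g$. The mean of $|g|$ is then controlled by that of $|f|$ via $|g'|=|w_f|\,|h'|\leqslant r^{q-1}|h'|$ together with the dilatation estimate above.

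The heart of the argument is to convert the \emph{one-sided} covering bound into an \emph{upper} bound for $|f|$ on circles. Following the method of~\cite{Sh90}, I would apply the covering estimate of Theorem~\ref{th1} to suitable Koebe/affine transforms of $f$, which turns the covering radius $R_q$ into a growth (distortion) bound; estimating the maximum of $|f|$ on a circle by half the length of its image curve (which encircles the origin, since $f$ is univalent with $f(0)=0$) then yields a bound of the form $|a_p|\leqslant r^{-p}B(r)$ in which the covering radius enters through the factor $1/R_q$. Optimizing the free radius $r$ at the balance point, equivalently taking $r=1/d_q$, collapses the two competing contributions into the stated shape $d_q^{\,p-1}\bigl(\tfrac{d_qR_q}{p}+p\bigr)$; the universal constant $d_qR_q=\frac{2\pi}{3\sqrt3}$ appears precisely because the optimization is governed by $\max_{0\leqslant t\leqslant1}(t-t^3)=\frac{2}{3\sqrt3}$, attained at $t=1/\sqrt3$, with the angular integration contributing the factor $\pi$.

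For the sharper estimate of $|a_2|$ I would retain the linear term of $h$ exactly: since $h(z)-z$ already starts at $z^2=z^p$, bounding the mean of $|h-z|$ rather than of $|h|$ subtracts the exact contribution of the normalized leading term and removes an additive $2$, giving $|a_2|\leqslant d_q\bigl(\tfrac{d_qR_q}{2}+2\bigr)-2$. The main obstacle is the middle step: upgrading the covering estimate to a two-sided growth bound for $|f|$ with a constant sharp enough to reproduce exactly $\frac{2\pi}{3\sqrt3}$, while ensuring that the transformed maps remain in a class to which Theorem~\ref{th1} applies and keeping track of the dilatation order $q-1$ throughout.
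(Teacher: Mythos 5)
Your starting facts are right (the Schwarz-lemma argument giving $f\in\mathcal S_H^0(1,q-1)$ and the covering $f(\mathbb D)\supseteq\{w:|w|<R_q\}$ are exactly how the paper obtains Proposition~\ref{prop2}), but the core of your argument --- converting the one-sided covering bound into an upper bound for $|f|$ on circles and then extracting $a_p$ from circular means --- is the step you yourself flag as the main obstacle, and it is a genuine gap, not a technicality. A covering theorem gives no upper bound on $|f|$; for holomorphic univalent maps one upgrades covering to growth via Koebe transforms, but the analogous transforms of a harmonic map do not stay in $\mathcal S_H^0(1,q-1)$ (composition with a disk automorphism destroys both the normalization and the vanishing order of the dilatation), so Theorem~\ref{th1} cannot simply be applied to ``suitable Koebe/affine transforms of $f$.'' More importantly, even granting some growth bound, your sketch has no mechanism that produces the constant $d_q=\frac{2\pi}{3\sqrt3\,R_q}$: the appeal to $\max_{0\leqslant t\leqslant1}(t-t^3)=\frac{2}{3\sqrt3}$ ``with the angular integration contributing the factor $\pi$'' is numerology; nothing in the circular-mean setup forces that optimization to appear.

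The paper's proof (which is what the method of \cite{Sh90} actually is) goes through a pullback that your sketch is missing entirely. Let $\Delta$ be the preimage under $f$ of the disk $\{|w|<R_q\}$ and let $\phi$ be the Riemann map of $\mathbb D$ onto $\Delta$ with $\phi(0)=0$, $\phi'(0)=c_1>0$. Then $F=R_q^{-1}f\circ\phi$ is a univalent harmonic map of $\mathbb D$ \emph{onto} $\mathbb D$, and this surjectivity is what makes the constants come out: (i) harmonic maps onto the disk satisfy $|A_n|\leqslant 1/n$ \cite[\S~4.5]{D04}, which applied to $A_p=R_q^{-1}(c_p+c_1^pa_p)$ gives $|c_p+c_1^pa_p|\leqslant R_q/p$; (ii) Hall's sharp form of the Heinz inequality \cite{H82} gives $|A_1|\geqslant\frac{3\sqrt3}{2\pi}$, i.e.\ $c_1\geqslant d_q^{-1}$ --- this, and not any optimization of yours, is where $\frac{2\pi}{3\sqrt3}$ comes from; (iii) de Branges' theorem applied to $\phi/c_1\in\mathcal S$ gives $|c_p|\leqslant pc_1$, and combining the three yields the stated bound. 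Finally, the additive $-2$ in the $a_2$ estimate does not come from ``subtracting the linear term of $h$'': it comes from replacing de Branges by Pick's inequality for bounded univalent functions, $|c_2|\leqslant 2c_1(1-c_1)$, whose $-2c_1^2$ term produces the $-2$ after dividing by $c_1^2$. Without the pullback, the self-map coefficient bound, Hall's inequality, and Pick's inequality, none of the stated constants can be reached.
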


When $q>5$, the latest estimate is lower
than 16.5 (which is the best known for the class $\mathcal S^0_H$ up to this moment \cite{AbAlP19}),
so we established new estimate of $a_2$ for the classes
$\mathcal S_{H}^{2,q}$ with $q>5$.   

One can apply Propositions~\ref{prop2} and~\ref{prop3} to the class
$\mathcal S_{H,\mathrm{odd}}^0$ of odd harmonic mappings. The class of odd
(holomorphic) univalent functions has been studied in details (see, for
example, \cite[\S~2.11]{D83}), but at the best of our knowledge odd
harmonic mappings are poor studied up to now. With respect to this class let
us state two results, which follow immediately from
Propositions~\ref{prop2} and~\ref{prop3} since $\mathcal
S_{H,\mathrm{odd}}^0\subset\mathcal S_H^{3,3}$.

\begin{corollary}\label{cor3}
The image of $\mathbb D$ under the mapping by a function $f\in\mathcal
S_{H,\mathrm{odd}}^0$ contains the disk $\{w : |w|<1/8\}$.
\end{corollary}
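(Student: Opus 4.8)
The plan is to derive Corollary~\ref{cor3} directly from Proposition~\ref{prop2} by locating the class $\mathcal S_{H,\mathrm{odd}}^0$ inside the scale $\mathcal S_H^{p,q}$. The single fact that does the work is that oddness of $f$ forces every even-indexed Taylor coefficient of both the holomorphic and anti-holomorphic parts to vanish, which pushes the first nonzero anti-holomorphic coefficient to an index $q\geqslant 3$.

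First I would unwind the oddness condition. Writing $f=h+\overline g$ and comparing $f(-z)$ with $-f(z)$, the holomorphic and anti-holomorphic parts separate and yield $h(-z)=-h(z)$ and $g(-z)=-g(z)$; hence $h$ and $g$ are odd holomorphic functions. In particular $a_2=b_2=0$, and more generally $a_{2n}=b_{2n}=0$ for all $n$. Consequently, if $g\not\equiv 0$, the first nonzero coefficient $b_q$ occurs at an odd index $q\geqslant 3$, so that $f\in\mathcal S_H^{p,q}$ with $q\geqslant 3$. (If $g\equiv 0$, then $f=h$ is an odd holomorphic univalent map, whose image contains $\{w:|w|<1/4\}$ by the Koebe theorem, and $1/4>1/8$, so this case is immediate.)

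With $q\geqslant 3$ in hand, Proposition~\ref{prop2} gives that $f(\mathbb D)$ contains the disk $\{w:|w|<R_q\}$ with $R_q=2^{-2q/(q-1)}$. The mechanism is transparent here: since $h$ and $g$ are odd, $h'$ and $g'$ are even, so the dilatation $w_f=g'/h'$ is even and vanishes at the origin; thus $w_f(z)=O(z^2)$ and $|w_f(z)|\leqslant|z|^2$ by the Schwarz lemma, placing $f$ in $\mathcal S_H^0(1,2)$, the instance $m=q-1=2$ of Theorem~\ref{th1}. It then remains to check that $R_3=1/8$ is the smallest of the radii $R_q$. Writing
\begin{equation*}
\frac{2q}{q-1}=2+\frac{2}{q-1},
\end{equation*}
we see this exponent is decreasing in $q$, so $R_q=2^{-2q/(q-1)}$ is increasing in $q$; hence $R_q\geqslant R_3=2^{-3}=1/8$ for every $q\geqslant 3$, and therefore $f(\mathbb D)\supseteq\{w:|w|<R_q\}\supseteq\{w:|w|<1/8\}$.

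I do not expect a genuine obstacle, since the statement is a specialization of Proposition~\ref{prop2}. The only point requiring care is that a literal reading of the inclusion $\mathcal S_{H,\mathrm{odd}}^0\subset\mathcal S_H^{3,3}$ would demand $a_3\neq 0$ and $b_3\neq 0$, which need not hold for a general odd mapping; the monotonicity of $R_q$ in $q$ is exactly what removes this assumption, confirming that the index $q=3$ produces the smallest radius and hence the uniform bound $1/8$.
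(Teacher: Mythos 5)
Your proof is correct and follows essentially the same route as the paper, which derives the corollary from Proposition~\ref{prop2} via the inclusion $\mathcal S_{H,\mathrm{odd}}^0\subset\mathcal S_H^{3,3}$ (so that $q=3$ and $R_3=2^{-3}=1/8$). Your extra care is in fact warranted: an odd mapping may have $a_3=0$, $b_3=0$, or even $g\equiv 0$, so the paper's literal inclusion is imprecise, and your patch --- monotonicity of $R_q$ in $q$ together with the Koebe theorem for the case $g\equiv 0$, or equivalently your observation that the even dilatation vanishing at the origin places $f\in\mathcal S_H^0(1,2)$, where Theorem~\ref{th1} gives exactly $1/8$ --- supplies the justification the paper leaves implicit.
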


\begin{corollary}\label{cor4}
For every $f\in\mathcal S_{H,\mathrm{odd}}^0$ one has $|a_3|\leqslant 319$.
\end{corollary}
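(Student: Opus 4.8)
The plan is to derive Corollary~\ref{cor4} directly from Proposition~\ref{prop3} by specializing it to odd harmonic mappings. First I would record why $\mathcal S_{H,\mathrm{odd}}^0\subset\mathcal S_H^{3,3}$: if $f=h+\overline g$ is odd, then both $h$ and $g$ are odd holomorphic functions, so all of their even-indexed Taylor coefficients vanish. Combined with the normalization $h(z)=z+\sum_{n\geqslant 2}a_nz^n$ and $g(z)=\sum_{n\geqslant 2}b_nz^n$, this forces $a_2=b_2=0$, so the first coefficient of $h$ (respectively $g$) allowed to be nonzero beyond the leading term is $a_3$ (respectively $b_3$). Hence for an odd mapping we may take $p=q=3$ in the notation of the class $\mathcal S_H^{p,q}$; if $a_3=0$ the desired estimate is trivial, so I may assume $p=3$.

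Next I would evaluate the two constants appearing in Proposition~\ref{prop3} at $q=3$. From $R_q=2^{-2q/(q-1)}$ one gets $R_3=2^{-3}=1/8$, which simultaneously reproduces the radius $1/8$ of Corollary~\ref{cor3}. Then
\begin{equation*}
d_3=\frac{2\pi}{3\sqrt3\,R_3}=\frac{16\pi}{3\sqrt3}.
\end{equation*}

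Finally I would substitute $p=q=3$ into the estimate $|a_p|\leqslant d_q^{\,p-1}\bigl(d_qR_q/p+p\bigr)$ of Proposition~\ref{prop3}, obtaining
\begin{equation*}
|a_3|\leqslant d_3^{\,2}\Bigl(\frac{d_3R_3}{3}+3\Bigr),
\end{equation*}
and evaluate the right-hand side numerically. Since $d_3\approx 9.67$, one has $d_3^{\,2}\approx 93.6$ and $d_3R_3/3+3\approx 3.40$, so the product is about $318.4$, which lies below $319$.

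There is essentially no serious obstacle here: the statement is an immediate numerical specialization of Proposition~\ref{prop3}. The only points demanding care are the bookkeeping that fixes $p=q=3$ for odd mappings (so that the quantity being estimated is indeed $a_3$ and the radius used is $R_3$) and the verification that the arithmetic leaves a small but genuine margin below the claimed bound $319$, which it does.
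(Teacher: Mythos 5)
Your proposal is correct and follows exactly the route the paper intends: the paper gives no separate proof of Corollary~\ref{cor4}, stating only that it follows immediately from Propositions~\ref{prop2} and~\ref{prop3} via the inclusion $\mathcal S_{H,\mathrm{odd}}^0\subset\mathcal S_H^{3,3}$, which is precisely your specialization $p=q=3$, $R_3=1/8$, $d_3=16\pi/(3\sqrt3)$, giving $d_3^{\,2}\bigl(d_3R_3/3+3\bigr)\approx 318.5<319$. Your added care about the degenerate case $a_3=0$ (and implicitly $p>3$) is a small refinement the paper glosses over, but the substance is identical.
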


Since analytic Koebe function $K(z)$ lies in $\mathcal S^0_H(1,m)$ for every $m>1$, we can 
estimate Koebe radius for the classes from above with 1/4. 
For the class $\mathcal S^0_H(1,3)$ we can improve this upper bound.

Harmonic Koebe function $K_{H}(z)$ was constructed using the "shearing" construction
from the holomorphic Koebe function $K(z)$ with prescribed analytic dilatation $w(z)=z$ (see, for example, \cite{ClSh84}).
We can construct analogous functions $K_{H,m}(z)$ with analytic dilatation
$w(z)=z^m$. In the case $m=2,3,4$ we can find these functions explicitly (in the case
$m>4$ finding the explicit formula for $K_{H,m}(z)$ seems too difficult): 

\begin{gather*}
K_{H,2}(z)=-\frac{1}{3}-\frac{1}{3(z-1)^3}+\overline{-\frac{1}{3}+\frac{-1+3z-3z^2}{3(z-1)^3}}, \\
K_{H,3}(z)= \frac{1}{54}\left(-27+\frac{2 \pi}{\sqrt{3}}-\frac{3\left(9-7 z+2 z^2\right)}{(z-1)^3}-4 \sqrt{3}\arctan\left[\frac{1+2 z}{\sqrt{3}}\right]\right)+\\
+\overline{\frac{1}{54}\left(-27+\frac{2 \pi}{\sqrt{3}}-\frac{27-75 z+60 z^2+4 \sqrt{3}(z-1)^3 \arctan\left[\frac{1+2 z}{\sqrt{3}}\right]}{(z-1)^3}\right)},\\
K_{H,4}(z)=-\frac{2}{3}-\frac{8-9 z+3 z^2+3(z-1)^3 \arctan(z)}{12(z-1)^3}+\\
+\overline{-\frac{2}{3}-\frac{8-21 z+15 z^2+3(z-1)^3 \arctan(z)}{12(z-1)^3}}.
\end{gather*}

Direct calculation shows that $K_{H,2}(-1)=-1/3$, $K_{H,3}(-1)\approx-0.231289$ and $K_{H,4}(-1)\approx-0.273968$.
So, in case $m=3$ we can estimate Koebe radius (estimate from below follows from 
Theorem \ref{th1}) for the class $\mathcal S^0_H(1,3)$ in 
the following way: $0.157\leq R_{H,3}\leq 0.232$. Unfortunately, this method can't improve the upper
bound for the classes $\mathcal S^0_H(1,2)$ and $\mathcal S^0_H(1,4)$.

Finally, in the class $\mathcal S_H^0(k,m)$ yet another extremal problem can
be readily solved.
\begin{proposition}\label{th2}
Let $m\in\mathbb N$. For each function $f\in\mathcal S_H^0(k,m)$ we have
\begin{equation*}
\mathrm{Area}(f(\mathbb D))\geqslant \pi\biggl(1-\frac{k^2}{m+1}\biggr),
\end{equation*}
where $\mathrm{Area}$, as usual, stands for the 2-dimensional Lebesgue
measure. This inequality is sharp and the minimum is attained only at the
function $f(z)=z+\frac{k}{m+1}\overline{z}^{m+1}$ and its rotations.
\end{proposition}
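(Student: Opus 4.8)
The plan is to start from the area formula for a sense-preserving univalent harmonic map. Since $f=h+\overline{g}\in\mathcal S_H^0(k,m)$ is univalent and the bound $|w_f(z)|\le k|z|^m$ with $k\le1$ gives $|w_f(z)|<1$ on $\mathbb D$, the Jacobian $J_f=|h'|^2-|g'|^2$ is positive, so the change-of-variables formula yields
\begin{equation*}
\mathrm{Area}(f(\mathbb D))=\iint_{\mathbb D}\bigl(|h'(z)|^2-|g'(z)|^2\bigr)\,dA(z).
\end{equation*}
Writing $h(z)=\sum_{n\ge1}a_nz^n$ with $a_1=1$ and applying Parseval's identity to $h'$ in polar coordinates gives the (routine) evaluation $\iint_{\mathbb D}|h'|^2\,dA=\pi\sum_{n\ge1}n|a_n|^2$.

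Next I would control the subtracted integral using the dilatation bound. The hypothesis $|w_f(z)|\le k|z|^m$ means $\omega(z):=w_f(z)/z^m$ extends holomorphically to $\mathbb D$ with $|\omega|\le k$. Since $g'=w_fh'=z^m\omega\,h'$, we get the pointwise estimate $|g'(z)|^2\le k^2|z|^{2m}|h'(z)|^2$, and integrating together with Parseval applied to $z^mh'(z)=\sum_{n\ge1}na_nz^{n+m-1}$ gives
\begin{equation*}
\iint_{\mathbb D}|g'|^2\,dA\le k^2\iint_{\mathbb D}|z|^{2m}|h'|^2\,dA=\pi k^2\sum_{n\ge1}\frac{n^2|a_n|^2}{n+m}.
\end{equation*}

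Combining the two displays, the inequality becomes
\begin{equation*}
\mathrm{Area}(f(\mathbb D))\ge\pi\sum_{n\ge1}n|a_n|^2\Bigl(1-\frac{k^2n}{n+m}\Bigr).
\end{equation*}
The term $n=1$ equals $\pi\bigl(1-\tfrac{k^2}{m+1}\bigr)$ because $a_1=1$, and every term with $n\ge2$ is nonnegative since $k\le1$ forces $k^2n/(n+m)\le n/(n+m)<1$. Discarding those terms yields the asserted lower bound on the area.

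The hard part is the equality discussion, where one must keep track of both inequalities used. For $n\ge2$ the coefficient $1-k^2n/(n+m)\ge m/(n+m)$ is \emph{strictly} positive, so equality forces $a_n=0$ for all $n\ge2$, i.e. $h(z)=z$ and $h'\equiv1$. Equality must then also hold in the estimate for $\iint|g'|^2$, which now reads $\iint_{\mathbb D}|z|^{2m}\bigl(k^2-|\omega|^2\bigr)\,dA=0$; as the integrand is nonnegative, $|\omega|\equiv k$ almost everywhere, and a bounded holomorphic function of constant modulus is constant, $\omega\equiv ke^{i\theta}$. Consequently $g'(z)=ke^{i\theta}z^m$, so $g(z)=\tfrac{ke^{i\theta}}{m+1}z^{m+1}$ and $f(z)=z+\tfrac{ke^{-i\theta}}{m+1}\overline{z}^{m+1}$; a direct computation of $e^{-i\alpha}f_0(e^{i\alpha}z)$ (which produces the phase $e^{-i\alpha(m+2)}$) identifies this one-parameter family with the rotations of $f_0(z)=z+\tfrac{k}{m+1}\overline{z}^{m+1}$, giving the sharpness and uniqueness statement.
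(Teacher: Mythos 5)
Your proof is correct and follows essentially the same route as the paper's: the Jacobian area formula, the pointwise bound $|g'|^2\le k^2|z|^{2m}|h'|^2$ coming from the dilatation hypothesis, Parseval's identity to reduce to the coefficient series $\pi\sum n\bigl(1-\tfrac{k^2n}{n+m}\bigr)|a_n|^2$, and then the equality analysis forcing $h(z)=z$ and $w_f(z)=e^{i\alpha}kz^m$. Your treatment of the equality case (factoring $w_f=z^m\omega$ and invoking the maximum principle, plus the explicit rotation-phase computation) is in fact slightly more careful than the paper's, which asserts these steps; the only point the paper makes explicit that you leave implicit is that the extremal function $z+\tfrac{k}{m+1}\overline{z}^{m+1}$ is indeed univalent, which is needed for sharpness.
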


\section{Proofs}\label{sec3}

The proof will appeal to the standard method of extremal length. Let us recall some
useful definitions.

Let $\Omega$ be a domain in the complex plane and let $\Gamma$ be a family of
locally rectifiable arcs $\gamma$ in $\Omega$. A metric is a Borel measurable
function $\rho(z)\geqslant0$ on $\Omega$. The $\rho$-length of an arc
$\gamma\in\Gamma$ is
\begin{equation*}
L(\gamma)=\int_{\gamma}\rho(z)|dz|.
\end{equation*}

A metric $\rho$ is said to be admissible for the curve family $\Gamma$ if
$L(\gamma)\geqslant1$ for every $\gamma\in\Gamma$. The extremal length of $\Gamma$
is the quantity $\lambda(\Gamma)$ defined by
\begin{equation*}
\frac{1}{\lambda(\Gamma)}=\inf\iint_{\Omega}\rho(z)^2dxdy, \quad z=x+iy,
\end{equation*}
where the infimum is taken over all admissible metrics $\rho$.

The module of a ring domain $\Omega$ is defined as
$\mu(\Omega)=\frac{1}{2\pi}\log\frac{b}{a}$ if $\Omega$ can be mapped
conformally onto an annulus $a<|z|<b$. The proof of the following proposition
can be found, for example, in~\cite[Appendix]{D04}.
\begin{proposition}\label{prop1}
The module of a doubly connected domain $\Omega$ coincides with the extremal
length of the family of curves joining the two boundary components of $\Omega$.
\end{proposition}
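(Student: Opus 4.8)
The plan is to reduce the statement to an explicit computation on a circular annulus, using the conformal invariance of extremal length as the transfer mechanism. First I would record that invariance: if $\phi\colon\Omega\to\Omega'$ is conformal and $\Gamma'=\phi(\Gamma)$, then given any metric $\rho'$ admissible for $\Gamma'$ one pulls it back by setting $\rho=(\rho'\circ\phi)\,|\phi'|$ on $\Omega$. For every $\gamma\in\Gamma$ with image $\gamma'=\phi\circ\gamma$ the substitution $w=\phi(z)$ gives $\int_\gamma\rho\,|dz|=\int_{\gamma'}\rho'\,|dw|\geqslant1$, so $\rho$ is admissible for $\Gamma$; and since the Jacobian of a conformal map equals $|\phi'|^2$, the change of variables yields $\iint_\Omega\rho^2\,dx\,dy=\iint_{\Omega'}\rho'^2\,du\,dv$. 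Because this correspondence between admissible metrics is a bijection preserving the area integral, the two infima agree and $\lambda(\Gamma)=\lambda(\Gamma')$.

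Since $\Omega$ is doubly connected it admits a conformal map onto a circular annulus $A=\{a<|z|<b\}$ for which, by definition, $\mu(\Omega)=\frac{1}{2\pi}\log(b/a)$, and this map carries the family of curves joining the two boundary components of $\Omega$ onto the corresponding family $\Gamma$ for $A$. By the invariance just established it therefore suffices to prove $\lambda(\Gamma)=\frac{1}{2\pi}\log(b/a)$ on the model annulus $A$, which I would do by matching an upper and a lower bound.

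For the lower bound on $\lambda(\Gamma)$ I exhibit the explicit radial metric $\rho_0(z)=\bigl(|z|\log(b/a)\bigr)^{-1}$. Along any $\gamma\in\Gamma$, writing $r=|z|$ and using $|dz|\geqslant|dr|$, one has $\int_\gamma\rho_0\,|dz|\geqslant\frac{1}{\log(b/a)}\bigl|\int_\gamma\tfrac{dr}{r}\bigr|=1$, since $\gamma$ runs from the circle $|z|=a$ to the circle $|z|=b$; hence $\rho_0$ is admissible, and a direct integration in polar coordinates gives $\iint_A\rho_0^2\,dx\,dy=\frac{2\pi}{\log(b/a)}$, so that $\lambda(\Gamma)\geqslant\frac{1}{2\pi}\log(b/a)$. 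For the reverse inequality I test an arbitrary admissible metric $\rho$ against the radial segments $\gamma_\theta\colon r\mapsto re^{i\theta}$, $a<r<b$, which lie in $\Gamma$. Admissibility gives $\int_a^b\rho(re^{i\theta})\,dr\geqslant1$, and by the Cauchy--Schwarz inequality $1\leqslant\bigl(\int_a^b\rho\,dr\bigr)^2\leqslant\bigl(\int_a^b\rho^2 r\,dr\bigr)\log(b/a)$. Integrating the resulting bound $\int_a^b\rho^2 r\,dr\geqslant1/\log(b/a)$ over $\theta\in[0,2\pi)$ yields $\iint_A\rho^2\,dx\,dy\geqslant\frac{2\pi}{\log(b/a)}$ for every admissible $\rho$, whence $\lambda(\Gamma)\leqslant\frac{1}{2\pi}\log(b/a)$. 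Combining the two bounds gives $\lambda(\Gamma)=\frac{1}{2\pi}\log(b/a)=\mu(\Omega)$.

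The annulus computation is entirely routine, so the step requiring genuine care is the conformal-invariance reduction, and in particular the verification that the uniformizing map onto the circular annulus is proper, so that it really sends the family joining the boundary components of $\Omega$ onto the analogous family on $A$. This properness is precisely what licenses transporting the extremal-length computation from the model annulus back to the abstract ring domain $\Omega$, and it is the main obstacle I would need to address carefully.
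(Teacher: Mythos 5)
Your proof is correct, but note that the paper itself does not prove this proposition at all: it is quoted as a known result, with the proof delegated to the Appendix of Duren's book \cite{D04}. What you have written is essentially that standard textbook argument, and all the pieces check out: the pullback $\rho=(\rho'\circ\phi)\,|\phi'|$ does give a bijection between admissible metrics that preserves the Dirichlet-type integral, so extremal length is a conformal invariant; on the model annulus the metric $\rho_0(z)=\bigl(|z|\log(b/a)\bigr)^{-1}$ yields $\lambda(\Gamma)\geqslant\frac{1}{2\pi}\log(b/a)$ (an admissible metric bounds $1/\lambda$ from above, so your inequality points the right way), and the Cauchy--Schwarz estimate along radial segments, integrated in $\theta$ via Fubini, gives the matching upper bound. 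The point you flag at the end --- that the uniformizing map must carry curves joining the two boundary components of $\Omega$ to curves joining the two boundary circles of $A$ --- is indeed the only step that is not a computation, and it is settled exactly as you suggest: a conformal homeomorphism is proper, so a curve eventually leaving every compact subset of $\Omega$ toward one boundary component has image eventually leaving every compact subset of $A$, and since the complement of a large compact subset of $A$ has exactly two components (one clustering on each circle), boundary components correspond and the two curve families are carried bijectively onto each other. One last remark for completeness: the paper's definition of the module already presupposes that $\Omega$ is conformally equivalent to a round annulus $a<|z|<b$ with $0<a<b<\infty$, so you need not worry about degenerate ring domains; within that convention your argument is a complete and self-contained proof, which is more than the paper provides.
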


\begin{proof}[Proof of Theorem~\ref{th1}]
Consider the function $f(z):=\frac{F(az)}{a}$ for the function $F\in\mathcal S_H^0(k,m)$ and for some positive $a<1$ and let $\Omega=f(\mathbb{D})$ be the range of $f$. Then $f\in\mathcal S^{0}_{H}$ and $f$ is the homeomorphism of the closed disc $\overline{\mathbb D}$ to $\overline\Omega$. Let $\delta$ be the distance from the origin to the boundary curve $\partial\Omega$.

Fix a parameter $\beta>1$ and observe that $|f(z)|>\varepsilon$ on the circle $|z|=\beta\varepsilon$  for all sufficiently small $\varepsilon\in(0,\delta)$. Suppose without loss of generality that $\delta\in\partial\Omega$ (this can be achieved by rotating the function $f$).

Consider the following domains:
\begin{gather*}
\Omega_{\varepsilon}=\Omega\backslash\{w:|w|\leqslant\varepsilon\},\\
\Delta=\mathbb{C}\backslash\{w:-\infty<w\leqslant-\delta\} \\
\Delta_{\varepsilon}=\Delta\backslash\{w:|w|\leqslant\varepsilon\}.
\end{gather*}
The first step is to note the following inequality (proof may be found in
\cite[p.~93]{F67})
\begin{equation}\label{eq:1}
    \mu(\Omega_{\varepsilon})\leqslant\mu(\Delta_{\varepsilon}).    
\end{equation}

The next step is to estimate $\mu(\Omega_\varepsilon)$ from below. Let $\mathbb{D}_{\beta\varepsilon}$ be the ring $\{z:\beta\varepsilon <|z|<1\}$. Consider the metric $\rho$, admissible for a family of crosscuts in $\mathbb{D}_{\beta\varepsilon}$ and define $\rho(z)=0$ outside $\mathbb{D}_{\beta\varepsilon}$. Define a metric $\widetilde\rho$ on $\Omega_{\varepsilon}$ by
\begin{equation*}
\widetilde\rho(f(z))=\frac{\rho(z)}{|h^{\prime}(z)|-|g^{\prime}(z)|},
\end{equation*}
\noindent where $f=h+\overline g$. Let $\widetilde\gamma$ be a crosscut in $\Omega_{\varepsilon}$, and let $\gamma$ be the restriction of its preimage $f^{-1}(\widetilde\gamma)$ to a crosscut of $\mathbb{D}_{\beta\varepsilon}$. Then
\begin{equation*}
\int_{\widetilde\gamma}\widetilde\rho(z)|dz|=\int_{\gamma}\frac{\rho(z)|\frac{\partial f}{\partial s}(z)|}{|h^{\prime}(z)|-|g^{\prime}(z)|}|dz|,
\end{equation*}

\noindent where $|\partial f/\partial s(z)|$ denotes directional derivative of $f$ along $\gamma$.

But $|\partial f/\partial s(z)|\ge|h^{\prime}(z)|-|g^{\prime}(z)|$, so $\widetilde\rho$ is an admissible metric for the family of crosscuts in $\Omega_{\varepsilon}$.

Therefore,
\begin{equation*}
\frac{1}{\mu(\Omega_{\varepsilon})}\leqslant\iint_{\Omega_{\varepsilon}}\widetilde\rho(z)^2dxdy=\iint_{\mathbb{D}_{\beta\varepsilon}}\frac{\rho(z)^2J_f(z)}{(|h^{\prime}(z)|-|g^{\prime}(z)|)^2}dxdy,
\end{equation*}

\noindent where $J_f(z)=|h^{\prime}(z)|^2-|g^{\prime}(z)|^2$ is the Jacobian of $f$. But since $|w_f(z)|=|w_F(az)|\le ka^m|z|^m$, we get an estimate:
\begin{equation*}
\frac{J_f(z)}{(|h^{\prime}(z)|-|g^{\prime}(z)|)^2}=\frac{|h^{\prime}(z)|+|g^{\prime}(z)|}{|h^{\prime}(z)|-|g^{\prime}(z)|}=\frac{1+|w_f(z)|}{1-|w_f(z)|}\leqslant\frac{1+ka^m|z|^m}{1-ka^m|z|^m}
\end{equation*}

Thus, the inequality
\begin{equation}\label{eq:2}
\frac{1}{\mu(\Omega_{\varepsilon})}\leqslant\iint_{\mathbb{D}_{\beta\varepsilon}}\frac{1+ka^m|z|^m}{1-ka^m|z|^m}\rho(z)^2dxdy.
\end{equation}

\noindent holds for any metric $\rho$ that is admissible for the family of crosscuts in $\mathbb{D}_{\beta\varepsilon}$.

Now choose the admissible metric
\begin{equation*}
\rho(z)=\frac{1-ka^mr^m}{1+ka^mr^m}\frac{1}{r}\Bigl\{\int_{\beta\varepsilon}^{1}\frac{1-ka^mt^m}{1+ka^mt^m}\frac{1}{t}dt\Bigr\}^{-1}
\end{equation*}

Then, using the formula \ref{eq:2}, we obtain that
\begin{equation*}
\frac{1}{\mu(\Omega_{\varepsilon})}\leqslant 2\pi \Bigl\{\int_{\beta\varepsilon}^{1}\frac{1-ka^mt^m}{1+ka^mt^m}\frac{1}{t}dt\Bigr\}^{-1}
\end{equation*}

Straightforward integration leads to the following expression
\begin{equation*}
\int_{\beta\varepsilon}^{1}\frac{1-ka^mt^m}{1+ka^mt^m}\frac{1}{t}dt=-\frac{2}{m}\ln(1+ka^mt^m)\big\vert^{t=1}_{t=\beta\varepsilon}+\ln(t)\big\vert^{t=1}_{t=\beta\varepsilon}=\frac{2}{m}\ln\frac{1+ka^m\beta^m\varepsilon^m}{1+ka^m}-\ln\beta\varepsilon
\end{equation*}

So,
\begin{equation}\label{eq:3}
\mu(\Omega_{\varepsilon})\geqslant\frac{1}{2\pi}\Bigl(\frac{2}{m}\ln\frac{1+ka^m\beta^m\varepsilon^m}{1+ka^m}-\ln\beta\varepsilon\Bigr)
\end{equation}
On the other hand,  the function $\psi(z)=4\delta K(z)$ maps $\mathbb{D}$ onto $\Delta$, and so $\psi(\mathbb{D}_{\varepsilon(4\delta)^{-1}})$ agrees approximately with $\Delta_\varepsilon$ when $\varepsilon$ is small. Since the module is a conformal invariant, this gives the approximate formula
\begin{equation}\label{eq:4}
\mu(\Delta_{\varepsilon})=\frac{1}{2\pi}\ln\frac{4\delta}{\varepsilon}+\mathcal{O}(\varepsilon), \varepsilon\to0.
\end{equation}
Comparing \ref{eq:1}, \ref{eq:3} and \ref{eq:4}, we arrive at the following inequality
\begin{equation*}
\frac{2}{m}\ln\frac{1+ka^m\beta^m\varepsilon^m}{1+ka^m}-\ln\beta\varepsilon\leqslant \ln\frac{4\delta}{\varepsilon}+\mathcal{O}(\varepsilon).
\end{equation*}
Letting $\varepsilon$ tend to zero, and $\beta$ tend to 1, we deduce finally that
\begin{equation*}
\delta\geqslant\frac{1}{4(1+ka^m)^{\frac{2}{m}}}.
\end{equation*}

However, $|f(z)|\geqslant\delta$ when $|z|=1$. Thus, since $f(z)=\frac{F(az)}{a}$ and $a$ was chosen arbitrarily, it follows that
\begin{equation*}
|F(z)|\geqslant\frac{|z|}{4(1+k|z|^m)^{\frac{2}{m}}}.
\end{equation*}

for every $z\in\mathbb{D}$.
\end{proof}

\begin{proof}[Proof of Proposition~\ref{prop2}]
If $f\in \mathcal S^{p,q}_H$, then the order of the zero at the origin of $g^{\prime}(z)$ is equal to $q-1$. But the order of the zero at the origin of the analytic dilatation $w_f(z)$ coincides with the order of the zero of the function $g^{\prime}(z)$. So, due to the Schwarz lemma $|w_f(z)|\leqslant|z|^{q-1}$. Thus, $f$ belongs to the class $\mathcal S_H^0(1,q-1)$ and Theorem~\ref{th1} gives the proof of the proposition.
\end{proof}

\begin{proof}[Proof of Proposition~\ref{prop3}]
Let $\Delta$ be a preimage of the disk $|z|<R_q$ under $f$. Let $\phi$ be the conformal mapping of the unit disk onto $\Delta$ with the condition $\phi(0)=0,\phi'(0)>0$. Then $F(z)=R_q^{-1}f(\phi(z))$ is a harmonic mapping of the unit disk onto itself due to the Proposition~\ref{prop2}. On the other hand, if $\phi(z)=c_1z+c_2z^2+...$, then
\begin{equation*}
f(\phi(z))=\phi(z)+a_p\phi(z)^p+..+\overline{b_q\phi(z)^q}+..=c_1z+c_2z^2+...+(c_p+c_1^pa_p)z^p+..
\end{equation*}

Thus, with the notation $F(z)=\sum\limits_{n=1}^\infty A_nz^n+\overline{\sum\limits_{n=1}^\infty B_nz^n}$, we arrive at the formulas
\begin{equation*}
A_1=R_q^{-1}c_1,A_p=R_q^{-1}(c_p+c_1^pa_p)
\end{equation*}

Since $F$ maps the unit disk harmonically onto itself, it follows
\cite[\S~4.5]{D04} that $|A_n|\leqslant\frac{1}{n}$, or
\begin{equation*}
|c_p+c_1^pa_p|\leqslant\frac{R_q}{p}.
\end{equation*}

Thus, we have
\begin{equation}\label{eq:5}
|a_p|\leqslant c_1^{-p}\Bigl(\frac{R_q}{p}+|c_p|\Bigr).
\end{equation}

de Branges\cite{dBr85} theorem states , that
\begin{equation}\label{eq:6}
|c_p|\leqslant c_1p.
\end{equation}

Also, the exact form of the Heinz inequality, which is proved in \cite{H82},
gives the estimate  $|A_1|^2\geqslant\frac{27}{4}\pi^{-2}$, or
\begin{equation}\label{eq:7}
c_1\geqslant\frac{3\sqrt3}{2}R_q\pi^{-1}=d^{-1}_q.
\end{equation}

Combining \ref{eq:5}, \ref{eq:6} and \ref{eq:7}, we arrive at the estimate, stated in the proposition.

If $p=2$, then the estimate could be improved by using Pick's inequality for
bounded functions \cite[p.~74, ex.~33]{D83} instead of de Branges'
theorem. In this case we have
\begin{equation*}
|c_2|\leqslant 2c_1(1-c_1)
\end{equation*}

\noindent and with \ref{eq:5} and \ref{eq:7} the second part of the proposition is proved.
\end{proof}
\begin{proof}[Proof of Proposition~\ref{th2}]
The area $A$ of $f(\mathbb{D})$ is
\begin{gather*}
A =\iint_{\mathbb{D}}\left|h^{\prime}(z)\right|^2-\left|g^{\prime}(z)\right|^2 d x d y =\iint_{\mathbb{D}}\left(1-|w(z)|^2\right)\left|h^{\prime}(z)\right|^2 d x d y \geqslant\\
\geqslant \iint_{\mathbb{D}}\left(1-|kz^m|^2\right)\left|h^{\prime}(z)\right|^2 d x d y =\pi \sum_{n=1}^{\infty} n\left|a_n\right|^2-\iint_{\mathbb{D}}\left|\sum_{n=1}^{\infty}k n a_n z^{n+m-1}\right|^2 d x d y = \\
=\pi \sum_{n=1}^{\infty} n\left(1-k^2\frac{n}{n+m}\right)\left|a_n\right|^2=\pi\Bigl(1-\frac{k^2}{m+1}\Bigr)+\pi \sum_{n=2}^{\infty} n\left(1-\frac{k^2n}{n+m}\right)\left|a_n\right|^2,
\end{gather*}
\noindent as shown by a standard calculation in polar coordinates. The last sum is clearly minimized by choosing $a_n=0$ for all $n \geqslant 2$. The minimum area is attained only if $|w(z)| \equiv k|z|^m$, or $w(z)=e^{i \alpha} kz^m$. Hence, $g^{\prime}(z)=e^{i \alpha} kz^{m}$, which implies $\left|b_{m+1}\right|=\frac{k}{m+1}$ and $b_n=0$ for all $n \neq m+1$. But the function $f(z)=z+e^{i\alpha}\frac{k}{m+1} \bar{z}^{m+1}$ is univalent, so the theorem is proved.
\end{proof}

\end{document}